\documentclass[11pt]{article}
\usepackage{times}
\usepackage{geometry}
\usepackage{amsfonts,amsmath,amsthm,amssymb,latexsym}
\newtheorem{theorem}{Theorem}

\newtheorem{corollary}{Corollary}

\newtheorem{remark}{Remark}


\author{
Jos\'e A. S. Pelegr\'in${}^{a*}$, Alfonso Romero${}^{a*}$ and Rafael M. Rubio${}^{b*}$\\[6mm]
${}^a$ Departamento de Geometr\'{\i}a y Topolog\'{\i}a, \\
Universidad de Granada, 18071 Granada, Spain \\ E-mail\textup{:
\texttt{jpelegrin@ugr.es, aromero@ugr.es}}\\[3mm]
${}^b$ Departamento de Matem\'aticas, Campus de Rabanales, \\
Universidad de C\'ordoba, 14071 C\'ordoba, Spain\\
E-mail\textup{:\texttt{\;rmrubio@uco.es}}}

\date{}

\begin{document}

\title{On maximal hypersurfaces in Lorentz manifolds admitting a
parallel lightlike vector field}

\maketitle

\thispagestyle{empty}

\begin{abstract}
We study constant mean curvature spacelike hypersurfaces and in particular maximal hypersurfaces immersed in pp-wave spacetimes satisfying the timelike convergence condition. We prove the non-existence of compact spacelike hypersurfaces whose constant mean curvature is non-zero and also that every compact maximal hypersurface is totally geodesic. Moreover, we give an extension of the classical Calabi-Bernstein theorem to this class of pp-wave spacetimes.
\end{abstract}

\noindent {\it PACS number:} 02.40.Ma, 02.40.Vh.  \\
\noindent {\it 2010 MSC:} 53C50, 53C42, 53C80.   \\
\noindent {\it Keywords:} Lorentzian geometry, lightlike parallel vector fields, Calabi-Bernstein theorem.

\section{Introduction}
The pp-wave type spacetimes are an important family of spacetimes. Considering  such a spacetime as an exact solution of Einstein's field equation, it can model radiation (electromagnetic or gravitational) moving at the speed of light. The recent interest on pp-wave type spacetime can be explained, on the one hand, by its classical geometrical properties, on the other by its applications to string theory as well as the possibilities of direct detection of gravitational waves. Historically, the study of gravitational waves goes back to Einstein (\cite{E-R})
but the standard exact model was already introduced by Brinkmann in order to
determine Einstein spaces which can be improperly mapped conformally on some
Einstein one (see \cite{Br}). The original definition by Brinkmann says that a pp-wave spacetime is any spacetime whose metric tensor can be described, with respect to suitable coordinates, in the form
$ds^2 = H(u,x,y) \, du^2 + 2 \, du \, dv + dx^2 + dy^2,$
where H is any smooth function. Nowadays, pp-wave means any spacetime which admits a  parallel global  lightlike vector field \cite[45, p. 383]{book}.  Moreover, when the spacetime is Ricci-flat (vacuum solutions) it is called gravitational plane wave. The interest in better understanding the geometry of pp-waves and their potential applications to string theory justifies the study of these spacetimes from a wider perspective.

On the other hand, spacelike hypersurfaces in $(n\geq 3)$-dimensional Lorentzian
spacetimes are geometrical objects of great physical and
mathematical interest. Roughly speaking, each of them represents the
physical space in a given instant of a time function. In
Electromagnetism, a spacelike hypersurface is an initial data set
that univocally determines the future of the electromagnetic field
which satisfies the Maxwell equations \cite[Thm. 3.11.1]{Sa-Wu} and analogously,
for the simple matter equations \cite[Thm. 3.11.2]{Sa-Wu}. In Causality
Theory, the mere existence of a particular spacelike hypersurface
implies that the spacetime obeys a certain causal property. Let us
remark that the completeness of a spacelike hypersurface is required
whenever we study its global properties, and also, from a physical
viewpoint, completeness implies that the whole physical space is
taken into consideration. To know more details about the relevance of constant mean curvature spacelike hypersurfaces in General Relativity see \cite{M-T}.

From a mathematical point of view, the interest of spacelike
hypersurfaces is motivated by their nice Bernstein-type properties.
In fact, maximal spacelike hypersurfaces  in
an $(n\geq 3)$-dimensional spacetime are critical points of the area
functional. In the history of research on maximal hypersurfaces, a striking
fact was the discovery of new nonlinear elliptic problems. In
fact, the function defining a maximal graph in the
$(n+1)$-dimensional Lorentz-Minkowski spacetime $\mathbb{L}^{n+1}$
satisfies an elliptic second order PDE similar to the equation of
minimal graphs in Euclidean space $\mathbb{R}^{n+1}$, but with a
new and surprising behavior for its entire solutions: The only
entire solutions to the maximal hypersurface equation in
$\mathbb{L}^{n+1}$ are the affine functions defining spacelike
hyperplanes. This result was previously proved by Calabi
\cite{Calabi} for $n\leq 4$ and later extended for any $n$ in the
seminal paper by S.Y. Cheng and S.T. Yau \cite{Cheng-Yau} and it is called
Calabi-Bernstein theorem. Recall that the Bernstein theorem for
minimal graphs in the Euclidean space $\mathbb{R}^{n+1}$, holds only for
$n\leq 7$, \cite{S-S-Y}. An important goal in \cite{Cheng-Yau} was
the introduction of a new tool, the so-called Omori-Yau
generalized maximum principle \cite{OM}, \cite{YA}. By means of
the use of this technique, many uniqueness results were obtained.
For instance, S. Nishikawa \cite{N} proved that a complete maximal
hypersurface in a locally symmetric Lorentzian manifold whose
Ricci tensor satisfies a natural assumption on timelike tangent
vectors must be totally geodesic.

The special case of maximal surfaces in
$3$-dimensional spacetimes is of mathematical interest and,
sometimes, a source of inspiration for higher dimensions. So, for the $3$-dimensional Lorentz-Minkowski spacetime several new proofs of the so called classical Calabi-Bernstein theorem have been given by different authors (see \cite{Ro}, \cite{A-P2}, \cite{Ro-Ru2}, \cite{ARR}). Moreover, several extensions of the classical results in different direction have been proved recently (see \cite{AA}, \cite{E-R1}, \cite{Ca-Ro-Ru}).

In this work, we deal with complete maximal hypersurfaces in Lorentz manifolds admitting a
parallel lightlike vector field satisfying the timelike convergence condition (and in particular for Ricci-flat case). We show (see Theorem \ref{th1}) that in these Lorentzian manifolds (in fact, spacetimes) there are no compact constant mean curvature spacelike hypersurfaces except for the maximal case. Moreover, we prove that every compact maximal hypersurface must be totally geodesic. On the other hand, in section \ref{Calabi} we give a new extension of Calabi-Bernstein theorem for these Lorentzian manifolds. 

\section{Preliminaries}
The notion of symmetry is basic in Physics. In General Relativity, symmetry is usually based on the assumption of the existence of a one-parameter group of transformations generated by a Killing or, more generally, a conformal vector field. In fact, an usual simplification for the search of exact solutions to the Einstein equation is to assume a priori the existence  of such an infinitesimal symmetry (see \cite{Da}, \cite{Ea} for instance). A complete general approach to symmetries can be found in \cite{Za} (see also \cite{Du} and references therein).

A vector field $V$ on a Lorentzian manifold $(M,g)$ is called {conformal} if ${\cal L}_V g = 2 \lambda g$ for a smooth function $\lambda$, where ${\cal L}$ denotes the Lie derivative on $M$. It is easy to see that if $V$ is locally a gradient field (closed), then this condition is equivalent to

\begin{equation}
\label{confvec}
 \nabla_X V = \lambda X ,\ \forall X \in \mathfrak{X}(M)
\end{equation}

\noindent Vector fields satisfying (\ref{confvec}) are called closed conformal vector fields.
If $V \in \mathfrak{X}(M)$ is lightlike on $(M,g)$, then from (\ref{confvec}) we get

$$X g(V,V) = 2 g(\nabla_X V, V) = 2 \lambda g(X, V)=0 $$

\noindent for every $X \in \mathfrak{X}(M)$. Thus, $V$ is parallel, since $\lambda V=0$ and then $\lambda$ must be identically $0$.

\vspace{2mm}

Note that if a Lorentzian manifold $(M,g)$ admits a parallel lightlike vector field then it is time-orientable and if we choose a time-orientation and $(M,g)$ is connected then it is a spacetime.

On the other hand, recall that a Lorentzian manifold obeys the timelike convergence
condition (TCC) if its Ricci tensor $\overline{\mathrm{Ric}}$
satisfies
\[
\overline{\mathrm{Ric}}(Z,Z)\geq 0,
\]
for all timelike vector $Z$. It is normally argued that TCC is the
mathematical way to express that gravity, on average, attracts.

\section{Principal result}

Let $(\overline{M}, \langle, \rangle)$ be a Lorentzian manifold of dimension $n+1$ and let $x: M \longrightarrow \overline{M}$ be an isometrically immersed spacelike hypersurface. Let us suppose the existence of a parallel lightlike vector field $\xi$ on $\overline{M}$, i.e., $\langle \xi, \xi \rangle =0$, $\xi \neq 0$ such that $ \overline{\nabla} \xi =0$, where $\overline{\nabla}$ denotes the Levi-Civita covariant differential in $(\overline{M}, \langle, \rangle)$. We can decompose $\xi$ into its tangent and normal part, having $\xi = \xi^T + \xi^\perp = \xi^T - \langle N, \xi \rangle N$, being $N$ the timelike unitary normal vector field to $M$. It is clear that $\xi^\perp$ never vanishes. Observe that the existence  of a vector field which never vanishes can give some topological 	obstructions when the hypersurface $M$ is compact.

\begin{remark} {\rm Consider the Lorentzian manifold $(\overline{M}, \langle, \rangle)$ where $\overline{M}$ is given by the product $\Sigma\times \mathbb{S}^1\times \mathbb{R}$, endowed with the Lorentzian metric $\langle\, ,\rangle=\langle\,,\rangle_{_\Sigma}+2d\alpha dv+H(x,\alpha)d\alpha^2$, where $x\in\Sigma$, being $(\Sigma,\langle\,,\rangle_{_\Sigma})$  a compact Riemannian manifold, $\mathbb{S}^1$ the unitary sphere, $\alpha$ an angular coordinate on $\mathbb{S}^1$ and $v\in \mathbb{R}$. Now, it is clear that the vector field $\frac{\partial}{\partial v}$ on $\overline{M}$ is a parallel lightlike vector field and $\overline{M}$ admits a compact spacelike hypersurface, as long as $H$ is positive.}
\end{remark}

Let us now consider the function $\eta = \langle N, \xi \rangle$ and choose $N$ such that $\langle N, \xi \rangle >0$. It is not difficult to see that  
\begin{equation}
\label{nablau}
\nabla \eta = - A \xi^T,
\end{equation}
where $\nabla$ denotes the gradient operator on $(M,\langle\,,\rangle)$ and $A$ the shape operator of the isometric immersion associated with $N$.

Take a local orthonormal reference frame $\{ E_1, \dots, E_n\}$ for $(M,\langle\,,\rangle)$. Since $\xi$ is parallel, taking the tangent component of $\overline{\nabla}_{E_i} \xi$, we obtain

\begin{equation}
\label{nabeixi}
\nabla_{E_i} \xi^T = - \langle N, \xi \rangle A E_i. \\
\end{equation}

We compute now ${\rm{div}} (A \xi^T)$,

$$ {\rm{div}} (A \xi^T) = \sum_{i=1}^n \langle \nabla_{E_i} (A \xi^T), E_i \rangle  = \sum_{i=1}^n \langle (\nabla_{E_i} A) \xi^T, E_i \rangle + \sum_{i=1}^n \langle A(\nabla_{E_i} \xi^T), E_i \rangle.$$

Using (\ref{nabeixi}), we have

\begin{gather*}
 {\rm{div}} (A \xi^T) = \sum_{i=1}^n \langle (\nabla_{E_i} A) \xi^T, E_i \rangle + \sum_{i=1}^n \langle A(- \langle N, \xi \rangle A E_i), E_i \rangle = \\= \sum_{i=1}^n \langle (\nabla_{E_i} A) \xi^T, E_i \rangle - \langle N, \xi \rangle \sum_{i=1}^n \langle A^2 E_i, E_i \rangle = \sum_{i=1}^n \langle (\nabla_{E_i} A) \xi^T, E_i \rangle - \langle N, \xi \rangle \rm{trace}(A^2).
\end{gather*}

The Codazzi equation allows us to obtain,

\begin{gather*}
 {\rm{div}} (A \xi^T) = \sum_{i=1}^n \langle (\nabla_{\xi^T} A) E_i, E_i \rangle + \sum_{i=1}^n \langle \overline{{R}}(\xi^T, E_i) N, E_i \rangle - \langle N, \xi \rangle \rm{trace}(A^2) = \\ = \sum_{i=1}^n \langle (\nabla_{\xi^T} A) E_i, E_i \rangle -\overline{\rm{Ric}}(\xi^T, N) - \langle N, \xi \rangle \rm{trace}(A^2), 
\end{gather*}
Where $\overline{R}$ and $\overline{\rm{Ric}}$  denote the curvature and Ricci tensors of $\overline{M}$ respectively.
Finally, taking into account that tensor derivations commute with contractions we get

\begin{equation}
\label{divaxi}
{\rm{div}}(A \xi^T) = -n \langle \nabla {\cal H}, \xi \rangle - \overline{\rm{Ric}}(\xi^T, N) - \langle N, \xi \rangle \rm{trace}(A^2),
\end{equation}

\noindent where ${\cal H}=-\frac{1}{n}{\rm trace}\,(A)$ is the mean curvature function of $M$.
From (\ref{nablau}) and (\ref{divaxi}), the Laplacian of the distinguished function $\eta$ is given by

\begin{equation}
\label{laplacu}
\Delta \eta = n \langle \nabla {\cal H}, \xi \rangle + \overline{\rm{Ric}}(\xi^T, N) + \eta \ \rm{trace}(A^2).
\end{equation}

Moreover, since $\xi$ is parallel, the Ricci tensor can be expressed as ${\rm{\overline{Ric}}}(\xi^T, N) = {\rm{\overline{Ric}}}(\xi, N) + \langle N, \xi \rangle \overline{\rm{Ric}}(N, N) = \eta \ \overline{\rm{Ric}}(N, N)$ and (\ref{laplacu}) leads to

\begin{equation}
\label{laplacu2}
\Delta \eta = n \langle \nabla {\cal H}, \xi \rangle + \eta \{ \overline{\rm{Ric}}(N, N) + \rm{trace}(A^2) \}.
\end{equation}

\begin{theorem}\label{th1} Let $(\overline{M}, \langle, \rangle)$ be an $(n+1)$-dimensional ($n\geq 2$) Lorentzian manifold which admits a parallel global lightlike vector field and suppose that 
$\overline{\mathrm{Ric}}(Z,Z)\geq 0,$
for every timelike vector $Z$. Consider $x:M\longrightarrow\overline{M}$ a compact isometrically immersed spacelike hypersurface. If $M$ has constant mean curvature, then it must be totally geodesic. As a direct consequence, there is no compact constant mean curvature spacelike immersed hypersuface whose mean curvature is  different from zero.
\end{theorem}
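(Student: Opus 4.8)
The plan is to exploit directly the Bochner-type identity (\ref{laplacu2}) established above, together with the compactness of $M$. Since $M$ has constant mean curvature, $\nabla \mathcal{H} = 0$, and (\ref{laplacu2}) collapses to
\begin{equation*}
\Delta \eta = \eta\,\bigl\{\,\overline{\mathrm{Ric}}(N,N) + \mathrm{trace}(A^2)\,\bigr\}.
\end{equation*}
First I would check that the right-hand side has a definite sign. Indeed, $N$ is a timelike (unit) vector field, so the timelike convergence condition gives $\overline{\mathrm{Ric}}(N,N) \geq 0$; moreover, since $M$ is spacelike the induced metric is Riemannian and $A$ is self-adjoint, whence $\mathrm{trace}(A^2) = \sum_{i=1}^{n} \lambda_i^2 \geq 0$, where $\lambda_1,\dots,\lambda_n$ are the principal curvatures. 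Finally, recall we normalized $N$ so that $\eta = \langle N,\xi\rangle > 0$ everywhere. Hence $\Delta \eta \geq 0$ on $M$, i.e. $\eta$ is subharmonic.

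Next I would use compactness to upgrade this: integrating over $M$ and applying the divergence theorem yields $\int_M \Delta\eta\, dV = 0$, and since the integrand is nonnegative it must vanish identically, $\Delta \eta \equiv 0$. Substituting back into the displayed identity and using $\eta > 0$ pointwise forces
\begin{equation*}
\overline{\mathrm{Ric}}(N,N) + \mathrm{trace}(A^2) = 0 .
\end{equation*}
Both summands being nonnegative, each vanishes; in particular $\mathrm{trace}(A^2) = \sum_{i=1}^{n}\lambda_i^2 = 0$, so all principal curvatures are zero and $A \equiv 0$. Therefore $M$ is totally geodesic. For the final assertion, any compact constant mean curvature spacelike hypersurface would then satisfy $\mathcal{H} = -\frac{1}{n}\mathrm{trace}(A) = 0$, so none with $\mathcal{H} \neq 0$ can exist.

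I do not anticipate a genuine obstacle, since the analytic work is already encoded in (\ref{laplacu2}); the only points requiring care are verifying the correct signs of the three factors on the right-hand side (especially that the TCC is applicable because $N$ is timelike, and that $\mathrm{trace}(A^2) \geq 0$ because the hypersurface is spacelike, hence $A$ diagonalizable with real eigenvalues) and invoking compactness at the right moment to pass from ``subharmonic'' to ``harmonic''. As an alternative to the divergence theorem one could evaluate at an interior maximum point of $\eta$, where $\Delta \eta \leq 0$, and reach the same conclusion.
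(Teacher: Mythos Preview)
Your proof is correct and follows essentially the same route as the paper: use (\ref{laplacu2}) with $\nabla\mathcal{H}=0$ to see that the positive function $\eta$ is subharmonic on the compact $M$, conclude it is constant (the paper phrases this via the maximum principle rather than integrating, but the content is identical), and then read off $\mathrm{trace}(A^2)=0$ from the same identity. Your write-up is in fact more explicit about the sign checks than the paper's own two-line proof.
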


\begin{proof}
From (\ref{laplacu2}) we have that the function $\eta$ is positive and subharmonic on a compact manifold. Thus, $\eta$ is constant. Again, from (\ref{laplacu2}) we obtain that ${\rm trace}\,(A^2)=0$.
\end{proof}

\section{A new extension of the Calabi-Bernstein theorem}\label{Calabi}

In this section we consider a (2+1)-dimensional Lorentzian manifold $(\overline{M}, \langle, \rangle)$, which admits a  parallel global lightlike vector field. Several examples of such Lorentzian manifolds can be found in \cite{Rio}. Moreover, the differentiable manifold $\mathbb{R}^ 3=\{(u,v,x): u,v,x\in \mathbb{R}\}$ can be endowed with the family of Lorentzian metrics given by

\begin{equation}\label{metr}
\langle\,,\rangle=H(u,x) \, du^2 + 2 \, du \, dv + dx^2.
\end{equation}

\noindent When the function $H$ is constant we have a realization of the Lorentz-Minkowski spacetime $\mathbb{L}^3$. Nonetheless, taking into account that the Ricci tensor (see \cite[Section 2.2]{F-S}) of this family of metrics is given by 
$$\overline{{\rm Ric}}=-\frac{\partial^2H}{\partial x^2}du\otimes du,$$ \noindent  it is clear that this family admits spacetimes which are not isometric to $\mathbb{R}^3$ and admit a global parallel lightlike vector field. So, it is enough to consider a function $H$ such that $\frac{\partial^2H}{\partial x^2}(p)\not=0$, for some point $p$.

Futhermore, we can give examples of closed maximal hypersurfaces in spacetimes of the previous family, which are not isometric to $\mathbb{L}^3$. Indeed, let $F:\mathbb{R}^ 3\longrightarrow\mathbb{R}$ be a smooth function and let $c\in \mathbb{R}$ be a regular value of $F$. It is well-known that the level set $\Sigma=F^{-1}(c)$ is a closed embedded hypersurface in $\mathbb{R}^3$. On the other hand, this hypersurface is spacelike with the induced metric, if and only if the vector field $\overline{\nabla}F$ is timelike on $\Sigma$. Taking into account the Christoffel symbols of the metric (see \cite[Section 2]{CFS}), it is easy to obtain

$$\overline{\nabla}F=\frac{\partial F}{\partial v}\frac{\partial}{\partial u}+\left(\frac{\partial F}{\partial u}-H\frac{\partial F}{\partial v}\right)\frac{\partial}{\partial v} + \frac{\partial F}{\partial x}\frac{\partial}{\partial x}.$$

\noindent Therefore, $\Sigma$ is spacelike if and only if the inequality
\begin{equation}\label{spatial}
2\frac{\partial F}{\partial u}\frac{\partial F}{\partial v}-H\left(\frac{\partial F}{\partial v}\right)^2+\left(\frac{\partial F}{\partial x}\right)^2<0,
\end{equation}

\noindent holds on $\Sigma$. 

Now, if we denote by $A$ the shape operator on $\Sigma$ associated to the unitary normal vector field $\frac{1}{\mid\overline{\nabla}F\mid}\overline{\nabla}F$, we have that 
\begin{equation}\label{hesiano}
\langle A w_1,w_2\rangle=-\frac{\overline{{\rm Hess}}(F)(w_1,w_2)}{\mid\overline{\nabla}F\mid},
\end{equation}

\noindent where $\overline{{\rm Hess}}(F)$ denotes the Hessian of the function $F$ in $(\mathbb{R}^3,\langle\,,\rangle)$ and $w_1,w_2\in T_p\Sigma$.

If we take $W_1,W_2\in\mathfrak{X}(\mathbb{R}^3)$ and we  put 

$$W_1=w_1^u\frac{\partial}{\partial u}+w_1^v\frac{\partial}{\partial v}+w_1^x\frac{\partial}{\partial x},\ \ W_2=w_2^u\frac{\partial}{\partial u}+w_2^v\frac{\partial}{\partial v}+w_2^x\frac{\partial}{\partial x},$$ \noindent then

\begin{equation}\label{hess}
\overline{{\rm Hess}}(F)(W_1,W_2)=\langle\overline{\nabla}_{W_1}(\overline{\nabla}F),W_2\rangle
\end{equation}

$$=W_1\left(\frac{\partial F}{\partial v}\right)(Hw_2^u+w_2^v)+ \frac{\partial F}{\partial v}\left(\frac{1}{2}w_1^uw_2^u \frac{\partial H}{\partial u}-\frac{1}{2}w_1^uw_2^x\frac{\partial H}{\partial x}     +\frac{1}{2}w_1^xw_2^u \frac{\partial H}{\partial x}\right)$$

$$+W_1\left(\frac{\partial F}{\partial u}-H\frac{\partial F}{\partial v}\right)w_2^u+W_1\left(\frac{\partial F}{\partial x}\right)w_2^x+\frac{\partial F}{\partial x}\left(\frac{1}{2}w_1^uw_2^u\frac{\partial H}{\partial x}\right).$$

Now, consider the spacetime $(\mathbb{R}^3, \langle,\,\rangle)$, whose metric is given in (\ref{metr}), taking $H$ a positive function such that $\frac{\partial H}{\partial u}=0$ and $\frac{\partial^2H}{\partial x^2}(p)\not=0$, for some point $p\in \mathbb{R}^ 3$. Let $F(u,v,x)=\lambda v$, $\lambda\in\mathbb{R}^+$ be. Taking into account (\ref{spatial}) and (\ref{hess}), it is not difficult to see that the hypersurface $F^{-1}(c)$ is spacelike and in particular maximal, since from (\ref{hesiano}), it is clear that ${\rm trace}\,(A)=0$.

\vspace{5mm}

Consider an immersed maximal surface $S$ in a (2+1)-dimensional Lorentzian manifold $(\overline{M}, \langle, \rangle)$, which admits a  parallel global lightlike vector field. Here, taking into account that ${\cal H}=0$, we have from (\ref{laplacu2}),

\begin{equation}
\label{laplacu3}
\Delta \eta = \eta \{ \overline{\rm{Ric}}(N, N) + \rm{trace}(A^2) \}.
\end{equation}

\noindent Moreover, in this case, using Cayley-Hamilton theorem we get

\begin{equation}
\label{modcuanabu}
 |\nabla \eta|^2 = \langle A^2 \xi^T, \xi^T \rangle = \frac{1}{2} \rm{trace}(A^2) |\xi^T|^2 = \frac{1}{2} \rm{trace}(A^2) \eta^2. 
\end{equation}

Now, we can state,

\begin{theorem}\label{geod} Let $(\overline{M}, \langle, \rangle)$ be a (2+1)-dimensional Lorentzian manifold $(\overline{M}, \langle, \rangle)$, which admits a parallel global lightlike vector field and suppose that 
$\overline{\mathrm{Ric}}(Z,Z)\geq 0,$
for all timelike vector $Z$. Let $x:S\longrightarrow\overline{M}$ be a complete isometrically immersed maximal surface. Then $S$ admits a positive function $\eta$ which is constant if and only if $S$ is totally geodesic.
\end{theorem}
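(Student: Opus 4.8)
The plan is to read the stated equivalence off the two pointwise identities established immediately before the theorem: $\nabla\eta=-A\xi^{T}$ from (\ref{nablau}) and $|\nabla\eta|^{2}=\tfrac12\,\mathrm{trace}(A^{2})\,\eta^{2}$ from (\ref{modcuanabu}), together with the normalization $\eta=\langle N,\xi\rangle>0$. First I would recall why $\eta$ is a well-defined \emph{positive} function on all of $S$: since $\xi^{\perp}=-\eta N$ never vanishes, $\eta$ is nowhere zero, and after the sign choice of $N$ it is strictly positive everywhere; in particular $\eta^{2}>0$ pointwise, and this positivity is the only place the word ``positive'' in the statement is actually used.

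For the implication ``$S$ totally geodesic $\Rightarrow$ $\eta$ constant'' there is essentially nothing to do: if $A\equiv 0$ then (\ref{nablau}) gives $\nabla\eta=-A\xi^{T}=0$ on the (connected) surface $S$, so $\eta$ is constant. This direction does not even use maximality.

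For the converse, assume $\eta$ is constant, so $\nabla\eta\equiv 0$ and hence $|\nabla\eta|^{2}\equiv 0$. Substituting into (\ref{modcuanabu}) — the identity whose derivation uses the Cayley--Hamilton theorem in dimension two together with $\mathrm{trace}(A)=0$ — yields $\tfrac12\,\mathrm{trace}(A^{2})\,\eta^{2}\equiv 0$. Since $\eta^{2}>0$ everywhere we get $\mathrm{trace}(A^{2})\equiv 0$; because $A$ is self-adjoint for the induced Riemannian metric, its eigenvalues are real and $\mathrm{trace}(A^{2})=\sum_{i}\lambda_{i}^{2}$, so all eigenvalues vanish and $A\equiv 0$, i.e. $S$ is totally geodesic.

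The only genuinely delicate point is the surface-specific identity (\ref{modcuanabu}): it is what lets one trade the a priori weak information $\nabla\eta=0$ for the strong pointwise vanishing of $\mathrm{trace}(A^{2})$, and it hinges on $\dim S=2$, so that $A^{2}=\tfrac12\,\mathrm{trace}(A^{2})\,I$ by Cayley--Hamilton once $\mathrm{trace}(A)=0$. I would also remark that, for this bare equivalence, completeness of $S$ and the timelike convergence condition are not needed; their role is to turn (\ref{laplacu3}) into the statement that $\eta$ is subharmonic, which is the input relevant to the maximum-principle (Omori--Yau type) arguments underlying the accompanying Calabi--Bernstein-type conclusions rather than to the characterization proved here.
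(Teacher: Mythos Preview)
Your argument is correct and matches the paper's intent: the theorem is stated immediately after (\ref{laplacu3}) and (\ref{modcuanabu}) with no separate proof, precisely because the equivalence is read off from (\ref{nablau}) and (\ref{modcuanabu}) exactly as you do. Your remark that completeness and the TCC are not actually needed for the bare equivalence is also accurate; an alternative route through (\ref{laplacu3}) with $\Delta\eta=0$ would use the TCC to split $\overline{\mathrm{Ric}}(N,N)+\mathrm{trace}(A^{2})=0$, but your use of (\ref{modcuanabu}) avoids this entirely.
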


On the other hand, taking into account the Gauss curvature of the surface, we can enunciate,

\begin{theorem}
\label{teototgeo}
Let $(\overline{M}, \langle , \rangle)$ be a $(2+1)$-dimensional manifold which admits a parallel global lightlike vector field. Let $x : S \longrightarrow \overline{M}$ be an isometrically immersed maximal surface. Then, 

\vspace{2mm}
\noindent (i) the Gaussian curvature of $S$ satisfies $ K \geq \overline{\rm{Ric}}(N, N)$, being $N$ the timelike unitary normal vector field to $S$. Moreover, equality holds if and only if $S$ is totally geodesic.

\noindent (ii) if the spacetime satisfies the TCC and  the surface is complete, then $S$ is parabolic.
\end{theorem}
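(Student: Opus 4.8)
The plan is to derive both statements from the Gauss equation, exploiting that the curvature tensor of $\overline{M}$ is annihilated by the parallel lightlike field $\xi$. Fix a point $p\in S$. From $\langle\xi,\xi\rangle=0$ and the orthogonal decomposition $\xi=\xi^T-\eta N$ (with $\xi^T$ tangent to $S$ and $N$ the timelike unit normal) one obtains $|\xi^T|^2=\eta^2$, which is precisely the identity behind $(\ref{modcuanabu})$; since $\eta>0$ everywhere, $\xi^T$ never vanishes, so $E_1:=\xi^T/\eta$ is a unit spacelike vector field tangent to $S$. Completing $E_1$ to an orthonormal basis $\{E_1,E_2\}$ of $T_pS$ gives an orthonormal basis $\{E_1,E_2,N\}$ of $T_p\overline{M}$ with $\xi=\eta\,(E_1-N)$, equivalently $N=E_1-\tfrac{1}{\eta}\,\xi$. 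Since $\overline{\nabla}\xi=0$ we have $\overline{R}(X,Y)\xi=0$ for all $X,Y$, and the symmetries of the curvature tensor then give $\langle\overline{R}(X,Y)Z,\xi\rangle=0$ and $\overline{R}(X,\xi)Y=0$ as well.

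Substituting $N=E_1-\tfrac1\eta\xi$ into $\overline{R}(E_i,N)N$, a direct expansion using these identities and the skew-symmetry of $\overline{R}$ yields $\overline{R}(E_1,N)N=0$ and $\overline{R}(E_2,N)N=\overline{R}(E_2,E_1)E_1$. Therefore
\[
\overline{\mathrm{Ric}}(N,N)=\langle\overline{R}(E_1,N)N,E_1\rangle+\langle\overline{R}(E_2,N)N,E_2\rangle=\langle\overline{R}(E_2,E_1)E_1,E_2\rangle=\overline{K}(T_pS),
\]
the ambient sectional curvature of the spacelike plane $T_pS$. Now the Gauss equation for the spacelike surface $S$, whose unit normal $N$ is timelike, reads $K=\overline{K}(T_pS)-\det A$; since $S$ is maximal, $\mathrm{trace}\,(A)=0$, so $\det A=-\tfrac12\,\mathrm{trace}\,(A^2)$ and hence
\[
K=\overline{\mathrm{Ric}}(N,N)+\tfrac12\,\mathrm{trace}\,(A^2)\ \ge\ \overline{\mathrm{Ric}}(N,N),
\]
which is (i). Equality at a point forces $\mathrm{trace}\,(A^2)=0$, and since $A$ is self-adjoint for the positive definite induced metric this means $A=0$ at that point; so equality holds everywhere precisely when $S$ is totally geodesic. (Equivalently, one may use that a $3$-dimensional Lorentzian manifold admitting a parallel lightlike vector field has vanishing scalar curvature together with the dimension-three formula for $\overline{R}$ in terms of $\overline{\mathrm{Ric}}$, but the computation above is self-contained.)

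For (ii), suppose in addition that the TCC holds and that $S$ is complete. Since $N$ is timelike, the TCC gives $\overline{\mathrm{Ric}}(N,N)\ge0$, so (i) implies $K\ge0$ on all of $S$. A complete Riemannian surface with non-negative Gaussian curvature is parabolic: by the Bishop comparison theorem its geodesic balls $B_r$ about a fixed point satisfy $\mathrm{Area}(B_r)\le\pi r^2$, whence $\int_1^{\infty}r\,\mathrm{Area}(B_r)^{-1}\,dr=\infty$, a sufficient condition for parabolicity. Hence $S$ is parabolic. The only delicate point in the whole argument is the curvature bookkeeping in (i) — using $\overline{R}(\cdot\,,\cdot)\xi=0$ (and its companion $\overline{R}(\cdot\,,\xi)\cdot=0$) to collapse $\overline{\mathrm{Ric}}(N,N)$ onto a single sectional curvature, and keeping the correct sign $K=\overline{K}-\det A$ of the Gauss equation for a spacelike hypersurface with timelike normal; the rest is routine.
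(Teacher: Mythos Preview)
Your argument is correct and follows essentially the same route as the paper: both exploit that $\overline{R}$ is annihilated by the parallel field $\xi$ to collapse the relevant ambient curvature terms, then combine with the Gauss equation and the identity $\det A=-\tfrac12\,\mathrm{trace}(A^2)$ for a maximal surface to obtain $K=\overline{\mathrm{Ric}}(N,N)+\tfrac12\,\mathrm{trace}(A^2)$. The only cosmetic differences are that you normalize $\xi^T$ to a unit vector and first isolate the clean intermediate identity $\overline{\mathrm{Ric}}(N,N)=\overline{K}(T_pS)$ before invoking Gauss for sectional curvatures, whereas the paper applies the Ricci form of the Gauss equation directly to $\xi^T$ and uses the Cayley--Hamilton identity $(\ref{modcuanabu})$; and for (ii) you justify parabolicity via Bishop volume comparison and the quadratic-area-growth criterion, while the paper simply cites the Ahlfors/Blanc--Fiala--Huber theorem.
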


\begin{proof} 
Since $S$ is maximal, from the Gauss equation we obtain,

\begin{equation}
\label{gauss1}
{\rm{Ric}}(\xi^T, \xi^T) = \overline{\rm{Ric}}(\xi^T, \xi^T) + \langle \overline{\rm{R}}(\xi^T, N) N, \xi^T \rangle + \langle A^2 \xi^T, \xi^T \rangle .\\
\end{equation}

Moreover, being ${\rm dim}S=2$ we have 
\begin{equation}\label{ricxit}
{\rm{Ric}}(\xi^T, \xi^T) = K |\xi^T|^2 = K \eta^2 
\end{equation}
and as  $\overline{\nabla}\xi = 0$, then

\begin{equation}\label{rricx}
\overline{\rm{Ric}}(\xi^T, \xi^T) =\eta^2 \overline{\rm{Ric}}(N, N),
\end{equation}

\noindent and

\begin{equation}
\label{rrxin}
\langle \overline{\rm{R}}(\xi^T, N) N, \xi^T \rangle = \langle \overline{\rm{R}}(\xi, N) N, \xi \rangle + 2 \eta \langle \overline{\rm{R}}(\xi, N) N, N \rangle + \eta^2 \langle \overline{\rm{R}}(N, N) N, N \rangle = 0. \\
\end{equation}

Therefore, substituting (\ref{ricxit}), (\ref{rricx}), (\ref{rrxin}) and (\ref{modcuanabu}) in (\ref{gauss1}) we have

\begin{equation}
\label{gaussguay}
K = \overline{\rm{Ric}}(N,N) + \frac{1}{2} \rm{trace}(A^2) \\
\end{equation}
\noindent and as a direct consequence (i) holds.

Finally, the TCC guarantees $K\geq 0$ and by a classical result of Ahlfors and Blanc-Fiala-Huber (see for instance \cite{Kazdan}) a complete Riemannian surface with non-negative Gaussian curvature is parabolic.
\end{proof}

We can give the following rigidity's result,

\begin{theorem} Let $(\overline{M}, \langle, \rangle)$ be a (2+1)-dimensional Lorentzian manifold $(\overline{M}, \langle, \rangle)$, which admits a parallel global lightlike vector field and suppose that 
$\overline{\mathrm{Ric}}(Z,Z)\geq 0,$
for all timelike vector $Z$. Let $x:S\longrightarrow\overline{M}$ be a complete isometrically immersed maximal surface. Then  $S$ is totally geodesic.
\end{theorem}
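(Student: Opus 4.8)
The plan is to prove that the distinguished function $\eta=\langle N,\xi\rangle$ is necessarily constant on $S$, after which the conclusion is immediate from Theorem \ref{geod} (equivalently, by reading it off from (\ref{laplacu3})). Recall that $\eta>0$ everywhere on $S$. Since $N$ is timelike, the TCC gives $\overline{\mathrm{Ric}}(N,N)\ge 0$, and as $\mathrm{trace}(A^{2})\ge 0$ always, equation (\ref{laplacu3}) shows that $\eta$ is a positive \emph{subharmonic} function on $S$. By Theorem \ref{teototgeo}(ii), the complete surface $S$ is parabolic. So far this parallels the compact case of Theorem \ref{th1}, but here $S$ need not be compact, so subharmonicity alone does not force $\eta$ to be constant.

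The point is that parabolicity cannot be applied to $\eta$ directly: it constrains superharmonic functions bounded from below (equivalently, subharmonic functions bounded from above), whereas $\eta$ is subharmonic but a priori unbounded above. The remedy is to pass to $1/\eta$, a positive smooth function since $\eta>0$. Using $\Delta(1/\eta)=-\eta^{-2}\Delta\eta+2\eta^{-3}|\nabla\eta|^{2}$ together with the Laplacian formula (\ref{laplacu3}) and the \emph{sharp} gradient identity (\ref{modcuanabu}) (which in turn rests on the Cayley--Hamilton theorem and is thus special to the two-dimensional maximal setting), the $\mathrm{trace}(A^{2})$ terms cancel exactly and one obtains
\[
\Delta\!\left(\frac{1}{\eta}\right)=\frac{1}{\eta^{2}}\left(-\Delta\eta+\frac{2|\nabla\eta|^{2}}{\eta}\right)=-\frac{\overline{\mathrm{Ric}}(N,N)}{\eta}\le 0,
\]
the last inequality again by the TCC. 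Hence $1/\eta$ is a positive superharmonic function on $S$.

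Since $S$ is parabolic, every positive superharmonic function on it is constant; therefore $1/\eta$, and hence $\eta$, is constant on $S$. Invoking Theorem \ref{geod} we conclude that $S$ is totally geodesic. (Alternatively, with $\eta$ constant, (\ref{laplacu3}) forces $\overline{\mathrm{Ric}}(N,N)+\mathrm{trace}(A^{2})=0$; both summands being nonnegative, $\mathrm{trace}(A^{2})=0$, i.e.\ $A=0$.)

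The only step that requires genuine care is the passage to $1/\eta$: it is the combination of (\ref{laplacu3}) with the \emph{precise} gradient estimate (\ref{modcuanabu}) --- not merely an inequality --- that makes $\eta^{-1}$ superharmonic, which is what lets the parabolicity from Theorem \ref{teototgeo}(ii) do its work. One should also confirm that the notion of parabolicity used in \cite{Kazdan} is stated in, or easily yields, the ``no nonconstant positive superharmonic function'' form; if only the ``bounded-above subharmonic functions are constant'' formulation is available, note that $-1/\eta\le 0$ is subharmonic and bounded above, giving the same conclusion. No further ingredients are needed, so the argument is short.
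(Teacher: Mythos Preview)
Your proof is correct and follows essentially the same route as the paper: compute $\Delta(1/\eta)$ from (\ref{laplacu3}) and (\ref{modcuanabu}) to obtain $\Delta(1/\eta)=-\eta^{-1}\,\overline{\mathrm{Ric}}(N,N)\le 0$, invoke the parabolicity of $S$ from Theorem \ref{teototgeo}(ii) to conclude that $\eta$ is constant, and finish with Theorem \ref{geod}. The additional commentary you provide (on why one must pass from $\eta$ to $1/\eta$, and on the equivalent formulations of parabolicity) is accurate and helpful but not present in the paper's terser argument.
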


\begin{proof} Making use of (\ref{laplacu3}) and (\ref{modcuanabu}),
$$ \Delta \left( \frac{1}{\eta} \right) = - \frac{1}{\eta^2} \Delta \eta + \frac{2}{\eta^3} |\nabla \eta|^2 = - \frac{1}{\eta} \{ \overline{\rm{Ric}}(N, N) + \rm{trace}(A^2) \} + \frac{1}{\eta} \rm{trace}(A^2) $$

\noindent Thus,

\begin{equation}
\label{lapunopartu}
\Delta \left( \frac{1}{\eta} \right) = - \frac{1}{\eta} \overline{\rm{Ric}}(N,N).
\end{equation}

\noindent Since $S$ is parabolic due to Theorem \ref{teototgeo}, it is enough to obseve that the function $\eta$ must be constant, and so, Theorem \ref{geod} applies.

\end{proof}

As a corollary of the previous theorem we obtain the classical Calabi-Bernstein theorem.

\begin{corollary} (Classical Calabi-Bernstein theorem)
The only complete maximal surfaces in the Lorentz-Minkowski spacetime $\mathbb{L}^3$ are the spacelike affine planes.
\end{corollary}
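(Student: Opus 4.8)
The plan is to exhibit $\mathbb{L}^3$ as a member of the family of spacetimes to which the preceding rigidity theorem applies, and then to identify which complete totally geodesic spacelike surfaces live inside $\mathbb{L}^3$.

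First I would recall, as already observed after (\ref{metr}), that for $H$ constant the metric $\langle\,,\rangle = H\,du^2 + 2\,du\,dv + dx^2$ on $\mathbb{R}^3$ is a realization of $\mathbb{L}^3$: since the Ricci tensor $\overline{\mathrm{Ric}} = -\frac{\partial^2 H}{\partial x^2}\,du\otimes du$ vanishes when $H$ is constant, and a $3$-dimensional Lorentzian manifold is flat as soon as it is Ricci-flat, $(\mathbb{R}^3,\langle\,,\rangle)$ is a simply connected, geodesically complete, flat Lorentzian manifold, hence isometric to $\mathbb{L}^3$ (alternatively, the linear change $v\mapsto v+\tfrac{H}{2}u$ reduces the constant-$H$ case to $H=0$, which is manifestly $-dt^2+dx^2+dy^2$). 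Moreover $\partial/\partial v$ is a global parallel lightlike vector field, because $\langle\partial_v,\partial_v\rangle=0$ and all Christoffel symbols $\Gamma^k_{iv}$ vanish (the coefficients $g_{iv}$ are constant and the metric does not depend on $v$). In particular $\overline{\mathrm{Ric}}(Z,Z)=0\geq 0$ for every timelike $Z$, so the timelike convergence condition holds trivially.

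Next I would invoke the previous rigidity theorem with $\overline{M}=\mathbb{L}^3$: every complete isometrically immersed maximal surface $x:S\longrightarrow\mathbb{L}^3$ is totally geodesic, i.e. its shape operator $A$ vanishes identically. It then remains to see that such an $S$ is an affine plane. Since $A\equiv 0$ we have $\overline{\nabla}_X N = -AX = 0$ for every $X$ tangent to $S$, so the unit timelike normal $N$ is a constant vector of $\mathbb{R}^3$ along $S$; hence $x(S)$ is contained in the affine hyperplane through $x(p_0)$ orthogonal to $N$, which is a spacelike plane $P\cong\mathbb{R}^2$ because $N$ is timelike. By the Gauss equation the induced metric on $S$ is flat, so $x:S\longrightarrow P$ is a local isometry between complete flat surfaces, hence a Riemannian covering map; as $P$ is simply connected, $x$ is a global isometry onto $P$. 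Conversely, every spacelike affine plane is obviously a complete totally geodesic (in particular maximal) surface, which completes the argument.

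The only step requiring a little care is this last identification, where completeness is essential: without it one only concludes that $S$ lies in a spacelike affine plane, and the covering-space argument is precisely what upgrades this to $S$ filling the whole plane. Everything else is a direct specialization of the theorem already proved.
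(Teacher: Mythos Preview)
Your proposal is correct and follows exactly the route intended in the paper: the corollary is stated immediately after the rigidity theorem with no separate proof, so the argument is simply to specialize that theorem to $\mathbb{L}^3$, which trivially admits a parallel lightlike vector field and satisfies the TCC. The paper leaves the identification of complete totally geodesic spacelike surfaces in $\mathbb{L}^3$ with spacelike affine planes as understood, whereas you spell it out carefully via the constancy of $N$ and the covering argument; this extra detail is sound and, as you note, is precisely where completeness enters.
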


\section*{Acknowledgments}The authors are grateful to the two referees for their deep readings and the suggestions made toward the improvement of this article. The authors are partially supported by the
Spanish MICINN Grant with FEDER funds MTM2013-47828-C2-1-P and by the Junta de Andaluc\'{\i}a Regional Grant P09-FQM-4496.

\end{document}